\newtheorem{defn}{Definition}
\newtheorem{rem}{Remark} 
\newtheorem{prop}{Proposotion}
\title{\LARGE \bf
Analysis and Comparison of Port-Hamiltonian Formulations for Field Theories - demonstrated by means of the Mindlin plate
}
\author{Markus Sch\"{o}berl and Andreas Siuka% <-this % stops a space
\thanks{M. Sch\"{o}berl is an APART fellowship holder of the Austrian Academy of Sciences)}% <-this % stops a space
\thanks{M. Sch\"{o}berl and A. Siuka are with the Institute of Automatic Control and Control Systems Technology,
        University of Linz, Altenberger Str. 69, 4040 Linz, Austria
        {\tt\small markus.schoeberl@jku.at, andreas.siuka@jku.at}}%
%\thanks{P. Misra is with the Department of Electrical Engineering, Wright State University,
%        Dayton, OH 45435, USA
%        {\tt\small pmisra@cs.wright.edu}}%
}
\begin{document}

\maketitle
\thispagestyle{empty}
\pagestyle{empty}

%%%%%%%%%%%%%%%%%%%%%%%%%%%%%%%%%%%%%%%%%%%%%%%%%%%%%%%%%%%%%%%%%%%%%%%%%%%%%%%%
\begin{abstract}

This paper focuses on the port-Hamiltonian formulation of systems described by partial differential equations. Based
on a variational principle we derive the equations of motion as well as the boundary conditions in the well-known 
Lagrangian framework. Then it is of interest to reformulate the equations of motion in a port-Hamiltonian setting, where we
compare the approach based on Stokes-Dirac structures to a Hamiltonian setting that makes use of the involved bundle structure
similar to the one on which the variational approach is based. We will use the Mindlin plate, a distributed parameter system
with spatial domain of dimension two, as a running example. 

\end{abstract}

\section{Introduction}

Distributed parameter systems described by partial differential equations
arise in systems theory from a modeling and a control theoretic point
of view and are without doubt a challenging research problem, where
lot of progress has been achieved in the last years. Also the port-Hamiltonian
setting, originally developed in the finite dimensional scenario has
been transfered to infinite-dimensional systems, where e.g. the well-known
approach based on (Stokes-)Dirac structures (also known from the lumped
parameter scenario) is available, see e.g. \cite{Maschke2005,SchaftMaschke,Machelli2004I,Machelli2004II,MachelliSIAM,Machelli2005CDC}
and references therein.

Also in mathematical physics, systems described by partial differential
equations (pdes) are interpreted in a Hamiltonian setting, e.g. in
\cite{Giachetta,Gotay,Olver} and references therein, but in most
cases systems with trivial boundary conditions are considered, which
is not the case in many engineerings applications. Therefore, the
approach based on Stokes-Dirac structures has been setup to overcome
the problem of non-zero energy flow through the boundary. 

A different port-Hamiltonian approach is based on a bundle structure
with respect to independent and dependent coordinates (not necessarily
relying on an underlying Stokes-Dirac structure), see also \cite{Enns,SchoeberlMCMDSPiezo2008,SchlacherHam2008,SchoeberlMCMDS2010,SiukaActa,SchoeberlCDC,SchoeberlLHMNL}
which are all based on \cite{Olver} but adapted to control purposes,
i.e. modified in a sense, such that non-zero energy flow through the
boundary can be considered (such that boundary ports are included)
and furthermore control inputs on the domain and/or the boundary can
be included.

The main difference of the approach relying on Stokes-Dirac structures
and the approach using bundles, is that the Stokes-Dirac scenario
is based on the choice of proper energy variables (flows and efforts)
for which the power balance is formulated, whereas the second approach
is based on a given Hamiltonian density (the total energy density)
and the evaluation of the power balance is performed based on the
underlying bundle formalism in order to restructure the pdes such
that the energy flows are linked to the physics. This will have the
consequence, that the variational derivative is interpreted differently
and the choice of state variables is different, in the two mentioned
approaches.

The purpose of this paper is, that based on the well-known Lagrangian
setting for first order field theories i) the partial differential
equations and the boundary conditions derived using a variational
principle are reinterpreted using two different port-Hamiltonian settings,
which describe the same physical phenomenon but using a completely
different port-Hamiltonian representation, ii) by using the example
of a Mindlin plate all these concepts are visualized and compared
in great detail.

\section{Notation }

We will use differential geometric methods for our considerations
and the notation is similar to the one in \cite{Giachetta}, where
the interested reader can find much more details about this geometric
machinery. To keep the formulas short and readable we will use tensor
notation and especially Einstein's convention on sums. 

We use the standard symbol $\wedge$ for the exterior product (wedge
product), $\mathrm{d}$ is the exterior derivative, $\rfloor$ the
natural contraction between tensor fields. By $\partial_{\alpha}^{B}$
are meant the partial derivatives with respect to coordinates with
the indices $_{B}^{\alpha}$ and $[m^{\alpha\beta}]$ corresponds
to the matrix representation of the (second-order) tensor $m$ with
components $m^{\alpha\beta}$. E.g. taking a second-order tensor $m$
and a co-vector $\omega$, the components of the contraction $m\rfloor\omega$
read in local coordinates as $m^{\alpha\beta}\omega_{\alpha}$, where
the summation over $\alpha$ is performed (Einstein convention on
sums). 

Furthermore $C^{\infty}(\cdot)$ denotes the set of the smooth functions
on the corresponding manifold. Moreover we will not indicate the range
of the used indices when they are clear from the context. Additionally,
pull backs and pull back bundles are only stated when necessary, when
they follow from the context they are not indicated to avoid exaggerated
notation.

Let us consider the bundle $\mathcal{Y}\rightarrow\mathcal{D},\,(X^{A},y^{\alpha})\rightarrow(X^{A})$.
The first jet manifold $\mathcal{J}^{1}(\mathcal{Y})$ possesses the
coordinates $(X^{A},y^{\alpha},y_{A}^{\alpha})$, where the capital
Latin indices $A,B$ are used for the base manifold $\mathcal{D}$
(independent coordinates) and $y_{A}^{\alpha}$ denote derivative
coordinates of first order (derivatives of the dependent coordinates
with respect to the independent ones) as well as 
\[
\partial_{A}=\frac{\partial}{\partial X^{A}}\,,\,\partial_{\alpha}=\frac{\partial}{\partial y{}^{\alpha}}\,,\,\partial_{\alpha}^{A}=\frac{\partial}{\partial y{}_{A}^{\alpha}}.
\]
The jet structure also induces the so-called total derivative 
\[
d_{A}=\partial_{A}+y_{A}^{\alpha}\partial_{\alpha}+y_{AB}^{\alpha}\partial_{\alpha}^{B}
\]
 acting on elements including first order derivatives and $y_{AB}^{\alpha}$
correspond to derivative coordinates of second order living in $\mathcal{J}^{2}(\mathcal{Y})$,
the second jet manifold. Based on the bundle structure $\mathcal{Y}\rightarrow\mathcal{D}$
let us introduce the vertical tangent bundle $\mathcal{V}(\mathcal{Y})$,
as well as 
\[
\Lambda_{1}^{d}(\mathcal{Y})=\mathcal{T}^{*}(\mathcal{Y})\wedge(\overset{d}{\wedge}\mathcal{T}^{*}(\mathcal{D})),
\]
see also \cite{Giachetta}, with a typical element $\omega=\omega_{\alpha}\mathrm{d}y^{\alpha}\wedge\mathrm{d}V$
for $\Lambda_{1}^{d}(\mathcal{Y})$ where $\mathrm{d}V$ denotes the
volume element on the manifold $\mathcal{D}$, i.e. $\mathrm{d}V=\mathrm{d}X^{1}\wedge\ldots\mathrm{\wedge d}X^{d}$
with $\mathrm{dim}(\mathcal{D})=d$ and the functions $\omega_{\alpha}$
may depend on derivative coordinates. Furthermore, a typical element
for $\mathcal{V}(\mathcal{Y})$ reads as $v=v^{\alpha}\partial_{\alpha}$
and when $v^{\alpha}$ depends on derivative coordinates we call $v$
a generalized vertical vector field, see \cite{Olver}.

\section{Background material}

\subsection{Geometric preliminaries}

We will consider densities $\mathfrak{F}$ in the sequel (a quantity
that can be integrated), $\mathfrak{F}=\mathcal{F}\mathrm{d}V$ with
$\mathcal{F}\in C^{\infty}(\mathcal{J}^{1}(\mathcal{X}))$ (we restrict
ourselves to the first-order case). By $F=\int_{\mathcal{D}}\mathfrak{F}$
we denote the integrated quantity, where of course a section of the
bundle $\mathcal{Y}\rightarrow\mathcal{D}$, i.e. a map $y=\Phi(X)$
leading to $y_{A}=\partial_{A}\Phi(X)$ has to be plugged in to be
able to evaluate the integral properly. 
\begin{prop}
\label{prop:Given-the-density}Given the density $\mathfrak{F}=\mathcal{F}\mathrm{d}V$
and a generalized vertical vector field $v:\mathcal{D}\rightarrow\mathcal{V}(\mathcal{Y})$,
together with its first jet-prolongation $j^{1}(v)=v^{\alpha}\partial_{\alpha}+d_{A}(v^{\alpha})\partial_{\alpha}^{A}$,
see \cite{Olver,Giachetta}, we obtain the decomposition 
\begin{eqnarray}
\int_{\mathcal{D}}j^{1}(v)(\mathcal{F}\mathrm{d}V) & = & \int_{\mathcal{D}}v\rfloor\delta\mathfrak{F}+\int_{\partial\mathcal{D}}v\rfloor\delta^{\partial}\mathfrak{F}.\label{eq:LieDiffPde-1}
\end{eqnarray}
Here the map $\delta\mathfrak{F}=\delta{}_{\alpha}\mathcal{F}\,\mathrm{d}y^{\alpha}\wedge\mathrm{d}V$
(corresponding to the Euler Lagrange operator of $\mathfrak{F})$,
see \cite{Giachetta}, with the coefficients $\delta_{\alpha}\mathcal{\mathcal{F}}=\partial_{\alpha}\mathcal{F}-d_{A}\partial_{\alpha}^{A}\mathcal{F}$
(called the variational derivatives) is used, as well as the boundary
operator $\delta^{\partial}\mathfrak{F}=\partial_{\alpha}^{A}\mathcal{F}\,\mathrm{d}y^{\alpha}\wedge\mathrm{d}V_{A}$
with $\mathrm{d}V_{A}=\partial_{A}\rfloor\mathrm{d}V$ (the boundary
volume form).
\end{prop}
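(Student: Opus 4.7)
The plan is to prove the identity by a direct integration-by-parts in the total derivative, followed by an application of Stokes' theorem to the resulting horizontal divergence. The calculation is essentially the standard derivation of the Euler–Lagrange operator, but I will keep track of the boundary term rather than discarding it.

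First I would compute the action of the prolonged field on the density pointwise. Because $\mathrm{d}V$ is a horizontal form on $\mathcal{D}$ and $v$ (hence $j^{1}(v)$) is vertical with respect to $\mathcal{Y}\rightarrow\mathcal{D}$, one has $j^{1}(v)(\mathcal{F}\,\mathrm{d}V)=j^{1}(v)(\mathcal{F})\,\mathrm{d}V$. Using the explicit form of the prolongation this gives
\[
j^{1}(v)(\mathcal{F})=v^{\alpha}\partial_{\alpha}\mathcal{F}+d_{A}(v^{\alpha})\,\partial_{\alpha}^{A}\mathcal{F}.
\]

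Second, I would apply the Leibniz rule for the total derivative to shift the derivative off of $v^{\alpha}$:
\[
d_{A}(v^{\alpha})\,\partial_{\alpha}^{A}\mathcal{F}=d_{A}\bigl(v^{\alpha}\partial_{\alpha}^{A}\mathcal{F}\bigr)-v^{\alpha}\,d_{A}\bigl(\partial_{\alpha}^{A}\mathcal{F}\bigr).
\]
Substituting back and regrouping yields
\[
j^{1}(v)(\mathcal{F})=v^{\alpha}\bigl(\partial_{\alpha}\mathcal{F}-d_{A}\partial_{\alpha}^{A}\mathcal{F}\bigr)+d_{A}\bigl(v^{\alpha}\partial_{\alpha}^{A}\mathcal{F}\bigr),
\]
where the first summand is exactly $v^{\alpha}\delta_{\alpha}\mathcal{F}$, i.e.\ the coefficient of $v\rfloor\delta\mathfrak{F}$, since $v\rfloor(\delta_{\alpha}\mathcal{F}\,\mathrm{d}y^{\alpha}\wedge\mathrm{d}V)=v^{\alpha}\delta_{\alpha}\mathcal{F}\,\mathrm{d}V$.

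Third, I would identify the remaining term as a horizontal divergence. Setting $\omega^{A}=v^{\alpha}\partial_{\alpha}^{A}\mathcal{F}$, one checks that $d_{A}(\omega^{A})\,\mathrm{d}V=\mathrm{d}(\omega^{A}\,\mathrm{d}V_{A})$ with $\mathrm{d}V_{A}=\partial_{A}\rfloor\mathrm{d}V$. After pulling back along a section $y=\Phi(X)$ this is an honest exact $d$-form on $\mathcal{D}$, so Stokes' theorem gives
\[
\int_{\mathcal{D}}d_{A}(v^{\alpha}\partial_{\alpha}^{A}\mathcal{F})\,\mathrm{d}V=\int_{\partial\mathcal{D}}v^{\alpha}\partial_{\alpha}^{A}\mathcal{F}\,\mathrm{d}V_{A}=\int_{\partial\mathcal{D}}v\rfloor\delta^{\partial}\mathfrak{F},
\]
where the last equality uses again the verticality of $v$ together with $v\rfloor(\partial_{\alpha}^{A}\mathcal{F}\,\mathrm{d}y^{\alpha}\wedge\mathrm{d}V_{A})=v^{\alpha}\partial_{\alpha}^{A}\mathcal{F}\,\mathrm{d}V_{A}$. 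Integrating the pointwise identity over $\mathcal{D}$ and assembling the two pieces gives (\ref{eq:LieDiffPde-1}).

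The only subtle step is the passage from the formal total derivative $d_{A}$ on the jet bundle to the ordinary exterior derivative on $\mathcal{D}$ that makes Stokes' theorem applicable; this is the point where the section $\Phi$ has to be plugged in so that $d_{A}$ becomes the spatial partial derivative $\partial/\partial X^{A}$ of the pulled-back coefficients. Everything else is bookkeeping with the Einstein convention and the identifications $v\rfloor(\mathrm{d}y^{\alpha}\wedge\mathrm{d}V)=v^{\alpha}\mathrm{d}V$ and $v\rfloor(\mathrm{d}y^{\alpha}\wedge\mathrm{d}V_{A})=v^{\alpha}\mathrm{d}V_{A}$.
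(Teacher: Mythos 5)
Your proposal is correct and follows essentially the same route as the paper: expand $j^{1}(v)(\mathcal{F}\,\mathrm{d}V)$, integrate by parts in the total derivative to isolate $v^{\alpha}\delta_{\alpha}\mathcal{F}$ plus a total divergence, and convert the divergence into the boundary term via Stokes' theorem. You merely spell out the intermediate bookkeeping (verticality of $v$, the contraction identities, and the pull-back along a section) that the paper leaves implicit.
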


\begin{proof}
The proof follows by evaluating the Lie-derivative of the geometric
object $\mathfrak{F}$ with respect to the vector field $j^{1}(v)$
\begin{eqnarray}
j^{1}(v)(\mathcal{F}\mathrm{d}V) & = & \left(v^{\alpha}(\partial_{\alpha}\mathcal{F}-d_{A}\partial_{\alpha}^{A}\mathcal{F})+d_{A}(v^{\alpha}\partial_{\alpha}^{A}\mathcal{F})\right)\mathrm{d}V\nonumber \\
 & = & \left(v^{\alpha}\delta_{\alpha}\mathcal{F}+d_{A}(v^{\alpha}\partial_{\alpha}^{A}\mathcal{F})\right)\mathrm{d}V\label{eq:intbyparts}
\end{eqnarray}
and applying the Theorem of Stokes \cite{Olver} to (\ref{eq:intbyparts}).
\end{proof}
The relation (\ref{eq:LieDiffPde-1}) will be of key interest in the
forthcoming, since it provides a natural decomposition of the expression
$\int_{\mathcal{D}}j^{1}(v)(\mathcal{F}\mathrm{d}V)$ into a term
on the domain $\mathcal{D}$ and one on the boundary $\partial\mathcal{D}$.
Important is the case when the generalized vector-field $v$ is linked
to the solution of a pde system (via its semi-group, that $v$ may
generate), then the formal change of $F=\int_{\mathcal{D}}\mathcal{F}\mathrm{d}V$
along solutions of a pde system can be computed as $\int_{\mathcal{D}}j^{1}(v)(\mathcal{F}\mathrm{d}V)$
(provided all operations are admissible), which we denote by $\dot{F}=\int_{\mathcal{D}}j^{1}(v)(\mathcal{F}\mathrm{d}V)$
in this special case.

\subsection{Dirac structures}

Based on the space of power variables $\mathcal{F}\times\mathcal{E}$
(flows and efforts) and the symmetric bilinear pairing
\begin{equation}
\ll(f_{1},e_{1}),(f_{2},e_{2})\gg:=\left\langle e_{1},f_{2}\right\rangle +\left\langle e_{2},f_{1}\right\rangle \label{eq:DiracPairing}
\end{equation}
where $\left\langle \cdot,\cdot\right\rangle $ is the dual product
of the linear spaces $\mathcal{F}$ and $\mathcal{E}=\mathcal{F}^{*}$
a Dirac structure is a linear subspace $\mathbb{D}\subset\mathcal{F}\times\mathcal{E}$
such that $\mathbb{D}=\mathbb{D}^{\bot}$ with respect to the pairing
(\ref{eq:DiracPairing}). For $(f,e)\in\mathbb{D}$ one has $\left\langle e,f\right\rangle =0$
such that the Dirac structure preserves power. This concept can be
transfered to the case where $\mathcal{F}$ and $\mathcal{E}$ are
spaces of vector-valued functions over a spatial domain $\mathcal{D}$,
then infinite dimensional systems are the focus, and to allow for
non-zero energy flow through the boundary the so-called Stokes-Dirac
structure is introduced, see \cite{Maschke2005,SchaftMaschke,Machelli2004I,Machelli2005CDC}
and section \ref{sub:Approach-based-on}.

\section{Lagrangian Framework}

In this section we recapitulate the well-known Lagrangian framework
for first-order field theories, and we will derive the partial differential
equations as well as the boundary conditions in a geometric fashion.
Thus, we consider a bundle 
\[
\mathcal{Q}\rightarrow\mathcal{D}_{L},\,(q^{\alpha},t^{0},X^{A})\rightarrow(t^{0},X^{A})
\]
where we use the shortcut $x^{i}=(t^{0},X^{A})$ such that the independent
variables are the time $t^{0}$ and the spatial ones $X^{A}\,,\, A=1,\ldots\dim(\mathcal{D}_{L})-1=n$
. A first order Lagrangian takes the form

\begin{equation}
\mathfrak{L}:\mathcal{J}^{1}(Q)\rightarrow\overset{n+1}{\wedge}\mathcal{T}^{*}(\mathcal{D}_{L})\label{eq:}
\end{equation}
$\mathfrak{L}=\mathcal{L}\omega$ with $\mathcal{L}\in C^{\infty}\left(\mathcal{J}^{1}\left(Q\right)\right)$
together with the volume element $\omega$ that meets 
\[
\omega=\mathrm{d}t^{0}\wedge\mathrm{d}X^{1}\ldots\wedge\mathrm{d}X^{n}\,,\,\,\,\omega_{i}=\partial_{i}\rfloor\omega\,,\,\,\,\, i=0,\ldots,n
\]
The variational problem for a section $s:\mathcal{D}_{L}\mathcal{\rightarrow Q}$
is the following%
\footnote{At this point the pull back is essential and therefore indicated,
i.e. $j^{1}(\psi_{\epsilon}\circ s)^{*}\mathfrak{L}$ means, that
the first prolongation of $(\psi_{\epsilon}\circ s)$ has to be plugged
in into $\mathfrak{L}=\mathcal{L}\omega$ in order to evaluate the
integral properly.%
}
\begin{equation}
\left.\left(d_{\epsilon}\int_{\mathcal{D}_{L}}j^{1}(\psi_{\epsilon}\circ s)^{*}\mathfrak{L}\right)\right|_{\epsilon=0}=0,\label{eq:VarProb}
\end{equation}
where the flow $\psi_{\epsilon}$ is used to deform sections $s:\mathcal{D}_{L}\mathcal{\rightarrow Q}$
and whose generator is a vertical vector field $v_{L}:\mathcal{Q}\rightarrow\mathcal{V}(Q)$. 

This is a well-known problem and treated for example in \cite{Giachetta,Olver}
and references therein. It is obvious that (\ref{eq:VarProb}) is
equivalent to 
\begin{eqnarray}
\int_{\mathcal{D}_{L}}(j^{2}s)^{*}\left(j^{1}(v_{L})(\mathfrak{L})\right) & = & 0,\label{eq:VarProb1}
\end{eqnarray}
see for example \cite{Giachetta} and based on (\ref{eq:LieDiffPde-1})
where we replace $\mathcal{F}$ be $\mathcal{L}$ we obtain the decomposition
\begin{equation}
\int_{\mathcal{D}_{L}}v_{L}\rfloor\delta\mathfrak{L}+\int_{\partial\mathcal{D}_{L}}v_{L}\rfloor\delta^{\partial}\mathfrak{L}=0.\label{eq:VarSplitB}
\end{equation}
Consequently, the partial differential equations for a first order
Lagrangian follow as 
\begin{equation}
\delta_{\alpha}(\mathcal{L})=0\,,\,\,\,\delta_{\alpha}=\partial_{\alpha}-d_{i}\partial_{\alpha}^{i}\label{eq:LAGEUL}
\end{equation}
and the boundary term is the second term in (\ref{eq:VarSplitB})
and reads in local coordinates as 
\begin{equation}
\int_{\partial\mathcal{D}_{L}}v_{L}^{\alpha}\partial_{\alpha}^{i}\mathcal{L}\omega_{i}=0.\label{eq:BondGen}
\end{equation}
The boundary conditions can be fulfilled by either allowing for no
variations on (a part of) $\partial\mathcal{D}_{L}$, i.e. $v_{L}^{\alpha}=0$
or by $\partial_{\alpha}^{i}\mathcal{L}=0$ or by a combinations of
both approaches. Possible is also the inclusion of external boundary
variables $F_{e,\alpha}^{i}$ such that 
\begin{equation}
\int_{\partial\mathcal{D}_{L}}v_{L}^{\alpha}(\partial_{\alpha}^{i}\mathcal{L}-F_{e,\alpha}^{i})\omega_{i}=0.\label{eq:BondGen-1}
\end{equation}
has to be met.

\section{Port-Hamiltonian Picture}

Now we turn to the Hamiltonian picture, where we discuss two different
port-Hamiltonian formulations. We will restrict ourselves to systems
without dissipation and without distributed control for simplicity,
but these properties can be included in both formalisms in a straightforward
manner, see \cite{SchaftMaschke,SchoeberlMCMDSPiezo2008}.

\subsection{Geometric approach based on underlying bundle structure\label{sub:Geometric-approach-based}}

We will introduce port-Hamiltonian systems described by pdes based
on a power balance relation, such that the power balance relation
together with the structure of the equations represent the physical
process. 
\begin{defn}
\label{A-port-controlled}A port-Hamiltonian boundary control system
without dissipation on a bundle $\mathcal{X}\rightarrow\mathcal{D_{H}},\,(x^{\alpha},X^{A})\rightarrow(X^{A})$
takes the form of 
\begin{equation}
\begin{array}{ccl}
\dot{x} & = & \mathcal{J}(\mathrm{\delta}\mathfrak{H)}\end{array}\label{eq:HamTimeInvStandardCF-1}
\end{equation}
with the Hamiltonian $\mathfrak{H}=\mathcal{H}\Omega\,,\,\,\Omega=\mathrm{d}X^{1}\wedge\ldots\wedge\mathrm{d}X^{d}$,
where $\mathcal{H}\in C^{\infty}(\mathcal{J}^{1}(\mathcal{X}))$ and
additional boundary conditions (possibly including boundary inputs,
optionally leading to so-called boundary ports). The map $\mathcal{J}$
is of the form $\mathbb{\mathcal{J}}:\Lambda_{1}^{d}(\mathcal{X})\rightarrow\mathcal{V\textrm{(}\mathcal{X}\textrm{)}}$
where $\mathcal{J}$ is a skew-symmetric map. 
\end{defn}
In general the map $\mathcal{J}$ can be a differential operator,
see our paper \cite{SchoeberlLHMNL}, but within this contribution
we exclude this case (since in many examples, e.g. mechanics this
is not required). 

Now we make use of proposition \ref{prop:Given-the-density} and replace
$\mathfrak{F}$ by $\mathfrak{H}$ in (\ref{eq:LieDiffPde-1}). Setting
$v=\dot{x}$ we obtain 
\begin{eqnarray}
\dot{H} & = & \int_{\partial\mathcal{D_{H}}}\dot{x}\rfloor\delta^{\partial}\mathfrak{H}=\int_{\partial\mathcal{D_{H}}}\dot{x}^{\alpha}\partial_{\alpha}^{A}\mathcal{H}\Omega_{A}\label{eq:HdotPdeoO}
\end{eqnarray}
where $\Omega_{A}=\partial_{A}\rfloor\Omega$, which reflects the
power balance, since the total change of the functional $H$ along
solutions of (\ref{eq:HamTimeInvStandardCF-1}), is affected by a
boundary port (if it exists) depending on the boundary conditions.
See e.g. \cite{SchoeberlMCMDSPiezo2008} for a formal introduction
concerning the boundary ports.

\subsection{Approach based on underlying Stokes-Dirac Structure\label{sub:Approach-based-on}}

Following \cite{Machelli2005CDC} we shortly recapitulate the port-Hamiltonian
framework based on Stokes-Dirac structures. For more details we refer
to \cite{Maschke2005,SchaftMaschke}.

\begin{flushleft}
We consider the space of flows $\mathcal{F}$ and the space of efforts
$\mathcal{E}$, which are spaces of vector-valued functions over a
spatial domain $\mathcal{D}$. Given $\mathcal{J}_{SD}$ a skew-adjoint
matrix differential operator \cite{Machelli2005CDC}, the space
\[
\mathbb{D}=\left\{ \left.\left(f,e,w\right)\in\mathcal{F}\times\mathcal{E}\times\mathcal{W}\right|f=-\mathcal{J}_{SD}e,w=\mathcal{B}_{\mathcal{D}}(e)\right\} 
\]
is a Stokes-Dirac structure, regarding the pairing
\begin{align*}
\begin{array}{c}
\!\!\!\!\!\!\!\!\!\!\!\!\!\!\!\!\!\!\!\!\!\!\!\!\!\!\!\!\!\!\!\!\!\!\!\!\!\!\!\!\ll\left(f_{1},e_{1},w_{1}\right),\left(f_{2},e_{2},w_{2}\right)\gg=\\
\qquad=\int_{\mathcal{D}}[e_{1}^{T}f_{2}+e_{2}^{T}f_{1}]\mathrm{d}V+\int_{\partial\mathcal{D}}\mathcal{B}_{\mathcal{J}_{SD}}(w_{1},w_{2})\mathrm{d}A
\end{array}
\end{align*}
where $\mathcal{B}_{\mathcal{J}_{SD}}$ is a boundary differential
operator induced by $\mathcal{J}_{SD}$ any by slight abuse of notation
$\mathrm{d}A$ corresponds to the boundary volume element. The map
$\mathcal{B}_{\mathcal{D}}$ is a boundary operator and the boundary
variables are $w$.
\par\end{flushleft}

If $(f,e,w)\in\mathbb{D}$ then 
\begin{eqnarray}
0 & = & \int_{\mathcal{D}}e^{T}f\mathrm{d}V+\frac{1}{2}\int_{\partial\mathcal{D}}\mathcal{B}_{\mathcal{J}_{SD}}(w,w)\mathrm{d}A\label{eq:DiracEnergy}
\end{eqnarray}
holds. Given an energy density $\mathcal{H}\mathrm{d}V$ where $\mathcal{H}$
depends on the energy variables, a port-Hamiltonian boundary control
system without dissipation can be stated as 
\begin{equation}
f=-\mathcal{J}_{SD}e\,,\,\, w=\mathcal{B}_{\mathcal{D}}(e)\label{eq:pdeDirac}
\end{equation}
where the energy variables $f$, are linked to the state variables
$\chi$ via $f=-\dot{\chi}$ and the efforts variables follow from
$e=\partial_{\chi}\mathcal{H}$.
\begin{rem}
Originally, in \cite{Machelli2005CDC} instead of $e=\partial_{\chi}\mathcal{H}$
the authors use $e=\delta_{\chi}H$ where $H=\int_{\mathcal{D}}\mathcal{H}\mathrm{d}V$,
but since $\mathcal{H}$ depends on energy variables, the variational
derivative degenerates to a 'partial' one. 
\end{rem}
Furthermore, from $H=\int_{\mathcal{D}}\mathcal{H}\mathrm{d}V$ and
the relations (\ref{eq:DiracEnergy}) and (\ref{eq:pdeDirac}) one
has
\begin{equation}
\dot{H}=\frac{1}{2}\int_{\partial\mathcal{D}}\mathcal{B}_{\mathcal{J}_{SD}}(w,w)\mathrm{d}A\label{eq:BalanceDirac}
\end{equation}
since $\dot{H}=\int_{\mathcal{D}}\partial_{\chi}\mathcal{H}\dot{\xi}\mathrm{d}V=-\int_{\mathcal{D}}e^{T}f\mathrm{d}V$.
\begin{rem}
To derive this energy balance also proposition \ref{prop:Given-the-density}
can be applied, but since no jet-variables are included it simplifies
to $\dot{H}=\int_{\mathcal{D}}j^{1}(v)(\mathcal{H}\mathrm{d}V)=\int_{\mathcal{D}}v^{\alpha}\partial_{\alpha}\mathcal{H}\mathrm{d}V$
and to derive (\ref{eq:BalanceDirac}) a further integration by parts
must be performed, since $v$ corresponds to $-f$ which involves
the differential operator $\mathcal{J}_{SD}$.
\end{rem}

\section{The Mindlin Plate}

Let us consider a rectangular plate with lengths $l_{x},l_{y}$, where
$h$ will denote the thickness, which will be modeled based on the
hypothesis stated by Mindlin. Therefore, we choose as independent
coordinates the vertical deflection $w$ of the mid-plane as well
as the rotations of a transverse normal to the $X$ and $Y$ direction
termed $\psi$ and $\phi,$ respectively. The kinetic energy density
$\mathcal{K}$ and the potential energy density $\mathcal{V}$ can
be stated as 

\begin{eqnarray*}
\mathcal{K} & = & \frac{\rho}{2}(\frac{h^{3}}{12}(\psi_{t}^{2}+\phi_{t}^{2})+hw_{t}^{2})
\end{eqnarray*}
and

\begin{eqnarray*}
\mathcal{V} & = & \frac{1}{2}kGh\left[(w_{X}-\psi)^{2}+(w_{Y}-\phi)^{2}\right]\\
 &  & +\frac{1}{2}D\frac{1-\nu}{2}(\psi_{Y}+\phi_{X})^{2}\\
 &  & +\frac{1}{2}(D(\psi_{X}^{2}+\nu\phi_{Y}\psi_{X})+D(\phi_{Y}^{2}+\nu\phi_{Y}\psi_{X})),
\end{eqnarray*}
where $\nu$ is the Poisson ratio, $k=\frac{\pi^{2}}{12},$ and $G,D$
are the plate stiffness and the plate module, respectively, see \cite{Machelli2005CDC}
and references therein.
\begin{rem}
The subscripts $t,X,Y$ correspond to the derivatives with respect
to these independent variables, according to the jet-bundle structure
in the Lagrangian framework. The subscripts $x,y$ to be used later,
correspond to quantities which are connected to the spatial variables
$X$ and $Y$ but they must not be confused with derivative variables.
Since we are in a time-invariant setting, we will use later on also
the $\dot{}$ notation, for time derivatives, instead of the subscripts
$t$.
\end{rem}
To derive the equations of motion we will use the variational principle
in a Lagrangian setting. Then given the partial differential equations,
we will interpret them in a Hamiltonian setting, either using the
approach presented in section \ref{sub:Geometric-approach-based}
and using an approach based on the Stokes-Dirac structure as in section
\ref{sub:Approach-based-on}.

\subsection{The Lagrangian picture}

In the Lagrangian framework we consider the bundle 
\begin{equation}
\mathcal{Q}\rightarrow\mathcal{D}_{\mathcal{L}},\,(w,\psi,\phi,t,X,Y)\rightarrow(t,X,Y)\label{eq:BundleLag}
\end{equation}
together with the Lagrangian density $\mathfrak{L}=\mathcal{L}\omega$
with $\mathcal{L}=\mathcal{K}-\mathcal{V},\,\omega=\mathrm{d}t\wedge\mathrm{d}X\wedge\mathrm{d}Y.$
The variational derivatives follow form the chosen bundle structure
(\ref{eq:BundleLag}) and follow to
\begin{eqnarray*}
\delta_{w} & = & \partial_{w}-d_{t}\partial_{w}^{t}-d_{X}\partial_{w}^{X}-d_{Y}\partial_{w}^{Y}\\
\delta_{\psi} & = & \partial_{\psi}-d_{t}\partial_{\psi}^{t}-d_{X}\partial_{\psi}^{X}-d_{Y}\partial_{\psi}^{Y}\\
\delta_{\phi} & = & \partial_{\phi}-d_{t}\partial_{\phi}^{t}-d_{X}\partial_{\phi}^{X}-d_{Y}\partial_{\phi}^{Y}.
\end{eqnarray*}
From $\delta_{w}\mathcal{L}=0,\,\delta_{\psi}\mathcal{L}=0,\,\delta_{\phi}\mathcal{L}=0$
corresponding to (\ref{eq:LAGEUL}) we derive the partial differential
equations
\begin{eqnarray*}
\rho hw_{tt} & = & kGh(w_{XX}-\psi_{X})+kGh(w_{YY}-\phi_{Y})\\
\rho\frac{h^{3}}{12}\psi_{tt} & = & kGh(w_{X}-\psi)+D(\psi_{XX}+\nu\phi_{XY})\\
 &  & +\frac{1}{2}D(1-\nu)(\psi_{YY}+\phi_{XY})\\
\rho\frac{h^{3}}{12}\phi_{tt} & = & kGh(w_{Y}-\phi)+D(\phi_{YY}+\nu\psi_{XY})\\
 &  & +\frac{1}{2}D(1-\nu)(\psi_{XY}+\phi_{XX}).
\end{eqnarray*}
If we introduce
\begin{eqnarray}
M_{x} & = & D(\psi_{X}+\nu\phi_{Y})=-\partial_{\psi}^{X}\mathcal{L}\nonumber \\
M_{y} & = & D(\phi_{Y}+\nu\psi_{X})=-\partial_{\phi}^{Y}\mathcal{L}\nonumber \\
M_{xy} & = & D\frac{1-\nu}{2}(\psi_{Y}+\phi_{X})=-\partial_{\psi}^{Y}\mathcal{L=}-\partial_{\phi}^{X}\mathcal{L}\label{eq:MQ}\\
Q_{x} & = & kGh(w_{X}-\psi)=-\partial_{w}^{X}\mathcal{L}\nonumber \\
Q_{y} & = & kGh(w_{Y}-\phi)=-\partial_{w}^{Y}\mathcal{L}\nonumber 
\end{eqnarray}
then the equations of motion take the familiar form%
\footnote{In order to be comparable with the literature we sometimes use $\partial_{X}$and
$\partial_{Y}$ although in a strict mathematical sense it should
be $d_{X}$ and $d_{Y}.$%
}
\begin{eqnarray}
\rho hw_{tt} & = & \partial_{X}Q_{x}+\partial_{Y}Q_{y}\nonumber \\
\rho\frac{h^{3}}{12}\psi_{tt} & = & Q_{x}+\partial_{X}M_{x}+\partial_{Y}M_{xy}\label{eq:PDELagMind}\\
\rho\frac{h^{3}}{12}\phi_{tt} & = & Q_{y}+\partial_{Y}M_{y}+\partial_{X}M_{xy}.\nonumber 
\end{eqnarray}
The boundary conditions follow from 
\[
\int_{\partial\mathcal{D_{L}}}\left(w_{L}\partial_{w}^{i}\mathcal{L}+\psi_{L}\partial_{\psi}^{i}\mathcal{L}+\phi_{L}\partial_{\phi}^{i}\mathcal{L}\right)\partial_{i}\rfloor(\mathrm{d}t\wedge\mathrm{d}X\wedge\mathrm{d}Y)=0
\]
where the variational vector field $v_{L}$ takes the form $v_{L}=w_{L}\partial_{w}+\psi_{L}\partial_{\psi}+\phi_{L}\partial_{\phi}$
and on the time-boundary no variation takes place (i.e. when $i=0$
then $v_{L}=0$) and (\ref{eq:MQ}) has to be used. Therefore we have
\begin{eqnarray}
\int_{\partial\mathcal{D_{L}}}\left(w_{L}Q_{x}+\psi_{L}M_{x}+\phi_{L}M_{xy}\right)\mathrm{d}t\wedge\mathrm{d}Y\nonumber \\
-\int_{\partial\mathcal{D_{L}}}\left(w_{L}Q_{y}+\psi_{L}M_{xy}+\phi_{L}M_{y}\right)\mathrm{d}t\wedge\mathrm{d}X=0\label{eq:LAGBondMind}
\end{eqnarray}
such that, e.g. if at $X=0$ we have that $w_{L}$ is arbitrary, then
$Q_{x}$ has to vanish or has to be compensated by an external boundary
term as in (\ref{eq:BondGen-1}), such that the familiar boundary
conditions are recovered.

\subsection{The Hamiltonian picture}

Based on the partial differential equations (\ref{eq:PDELagMind})
and the boundary conditions (\ref{eq:LAGBondMind}) we discuss the
two presented port-Hamiltonian formulations as well as the power balance
relations corresponding to the particular representation.

\subsubsection{Geometric approach}

Now we consider the bundle (which is different form the one in the
Lagrangian setting) 
\[
\mathcal{X}\rightarrow\mathcal{D_{H}},\,(w,\psi,\phi,p_{w},p_{\psi},p_{\phi},X,Y)\rightarrow(X,Y).
\]
From the Legendre transform we derive the temporal momenta $p_{w}=\partial_{w}^{t}\mathcal{L},\, p_{\psi}=\partial_{\psi}^{t}\mathcal{L},\, p_{\phi}=\partial_{\phi}^{t}\mathcal{L}$
which read as
\begin{equation}
p_{w}=\rho hw_{t}\,,\,\, p_{\psi}=\rho\frac{h^{3}}{12}\psi_{t}\,,\,\, p_{\phi}=\rho\frac{h^{3}}{12}\phi_{t}\label{eq:Legendre}
\end{equation}
and the Hamiltonian follows as 
\[
\mathcal{H}=\dot{w}p_{w}+\dot{\psi}p_{\psi}+\dot{\phi}p_{\phi}-\mathcal{L}.
\]
In the coordinates $(w,\psi,\phi,p_{w},p_{\psi},p_{\phi})$ together
with (\ref{eq:Legendre}) one has $\mathcal{H}=\mathcal{K}+\mathcal{V}$. 

To derive the Hamiltonian formulation as in (\ref{eq:HamTimeInvStandardCF-1})
we set $x=(w,\psi,\phi,p_{w},p_{\psi},p_{\phi})$ and obtain 
\begin{eqnarray}
\dot{x} & = & \mathcal{J}(\delta\mathfrak{H})\label{eq:pdeGeoEx}
\end{eqnarray}
which reads as
\[
\left[\begin{array}{c}
\dot{w}\\
\dot{\psi}\\
\dot{\phi}\\
\dot{p}_{w}\\
\dot{p}_{\psi}\\
\dot{p}_{\phi}
\end{array}\right]=\left[\begin{array}{cccccc}
0 & 0 & 0 & 1 & 0 & 0\\
0 & 0 & 0 & 0 & 1 & 0\\
0 & 0 & 0 & 0 & 0 & 1\\
-1 & 0 & 0 & 0 & 0 & 0\\
0 & -1 & 0 & 0 & 0 & 0\\
0 & 0 & -1 & 0 & 0 & 0
\end{array}\right]\left[\begin{array}{c}
\delta_{w}\mathcal{H}\\
\delta_{\psi}\mathcal{H}\\
\delta_{\phi}\mathcal{H}\\
\delta_{p_{w}}\mathcal{H}\\
\delta_{p_{\psi}}\mathcal{H}\\
\delta_{p_{\phi}}\mathcal{H}
\end{array}\right].
\]
The variational derivatives in this setting take the form
\begin{eqnarray*}
\delta_{w} & = & \partial_{w}-d_{X}\partial_{w}^{X}-d_{Y}\partial_{w}^{Y}\\
\delta_{\psi} & = & \partial_{\psi}-d_{X}\partial_{\psi}^{X}-d_{Y}\partial_{\psi}^{Y}\\
\delta_{\phi} & = & \partial_{\phi}-d_{X}\partial_{\phi}^{X}-d_{Y}\partial_{\phi}^{Y}
\end{eqnarray*}
and $\delta_{p_{w}}=\partial_{p_{w}}\,,\,\delta_{p_{\psi}}=\partial_{p_{\psi}}\,,\,\delta_{p_{\phi}}=\partial_{p_{\phi}}$because
of the different bundle structure compared to the Lagrangian approach. 

The boundary ports follow from (\ref{eq:HdotPdeoO}) and we obtain
\begin{eqnarray*}
\dot{H} & = & \int_{\partial\mathcal{D}_{H}}\dot{x}^{\alpha}\partial_{\alpha}^{A}\mathcal{H}\partial_{A}\rfloor(\mathrm{d}X\wedge\mathrm{d}Y)\\
 & = & \int_{\partial\mathcal{D_{H}}}(\dot{x}^{\alpha}\partial_{\alpha}^{X}\mathcal{H}\mathrm{d}Y-\dot{x}^{\alpha}\partial_{\alpha}^{Y}\mathcal{H}\mathrm{d}X).
\end{eqnarray*}
From the special choice of the Hamiltonian $\mathcal{H}$ we observe
that the expressions $\partial_{\alpha}^{A}\mathcal{L}$ and $\partial_{\alpha}^{A}\mathcal{H}$
correspond (apart form the sign), and therefore we also have 
\begin{eqnarray*}
\partial_{w}^{X}\mathcal{H}=Q_{x} & \,,\, & \partial_{w}^{Y}\mathcal{H}=Q_{y}\\
\partial_{\psi}^{X}\mathcal{H}=M_{x} & \,,\, & \partial_{\psi}^{Y}\mathcal{H}=M_{xy}\\
\partial_{\phi}^{X}\mathcal{H}=M_{xy} & \,,\, & \partial_{\phi}^{Y}\mathcal{H}=M_{y}
\end{eqnarray*}
which consequently leads to the power balance relation
\begin{eqnarray}
\dot{H} & = & \int_{\partial\mathcal{D_{H}}}(\dot{w}Q_{x}+\dot{\psi}M_{x}+\dot{\phi}M_{xy})\mathrm{d}Y\nonumber \\
 &  & -\int_{\partial\mathcal{D_{H}}}(\dot{w}Q_{y}+\dot{\psi}M_{yx}+\dot{\phi}M_{y})\mathrm{d}X\label{eq:HdotMinGeo}
\end{eqnarray}
which is based on (\ref{eq:HdotPdeoO}). 
\begin{rem}
It should be noted that depending on the boundary conditions, along
$\partial\mathcal{D}$ a boundary port appears only if in the pairings
$\dot{x}^{\alpha}\partial_{\alpha}^{A}\mathcal{H}$ both 'players'
are not equal to zero. Furthermore it should be noted that in $\dot{x}^{\alpha}$
only $(\dot{w},\dot{\psi},\dot{\phi})$ remain, since in $\mathcal{H}$
only jet variables with respect to $w,\psi,\phi$ appear, i.e. there
is a $w_{X}$ present but no $(p_{w})_{X}$ and so on. 
\end{rem}

\subsubsection{The Stokes-Dirac approach}

This approach is not based on a bundle structure, which distinguishes
dependent and independent variables strictly, but uses so-called energy
variables instead. Therefore, let us introduce the strain variables
as \cite{Machelli2005CDC} 
\begin{eqnarray}
\Gamma_{x} & = & -\psi_{X}\nonumber \\
\Gamma_{y} & = & -\phi_{Y}\nonumber \\
\Gamma_{xy} & = & -(\psi_{Y}+\phi_{X})\label{eq:Strains}\\
\Gamma_{xz} & = & w_{X}-\psi\nonumber \\
\Gamma_{yz} & = & w_{Y}-\phi.\nonumber 
\end{eqnarray}
Then one can introduce as state $\chi$ which consists of the momentum
variables and the strains 
\[
\chi=(\rho h\dot{w},\Gamma_{xz},\Gamma_{yz},\rho\frac{h^{3}}{12}\dot{\psi},\rho\frac{h^{3}}{12}\dot{\phi},\Gamma_{x},\Gamma_{y},\Gamma_{xy})
\]
and the Hamiltonian $\mathcal{H}=\mathcal{K}+\mathcal{V}$ can be
rewritten as
\begin{eqnarray*}
\mathcal{K} & = & \frac{\rho}{2}(\frac{h^{3}}{12}(\dot{\psi}^{2}+\dot{\phi}^{2})+h\dot{w}^{2})
\end{eqnarray*}
and
\begin{eqnarray*}
\mathcal{V} & = & \frac{1}{2}(Q_{x}\Gamma_{xz}+Q_{y}\Gamma_{yz}-M_{xy}\Gamma_{xy}-M_{x}\Gamma_{x}-M_{y}\Gamma_{y})
\end{eqnarray*}
such that $e=\partial_{\chi}\mathcal{H}$ follows to 
\[
e=(\dot{w},Q_{x},Q_{y},\dot{\psi},\dot{\phi},-M_{x},-M_{y},-M_{xy}).
\]
The partial differential equations can be stated as 
\begin{equation}
\dot{\chi}=\mathcal{J}_{SD}e=\mathcal{J}_{SD}\partial_{\chi}\mathcal{H}\label{eq:pdeDiracEx}
\end{equation}
where $\mathcal{J}_{SD}$ takes the form
\[
\left[\begin{array}{cccccccc}
0 & \partial_{X} & \partial_{Y} & 0 & 0 & 0 & 0 & 0\\
\partial_{X} & 0 & 0 & -1 & 0 & 0 & 0 & 0\\
\partial_{Y} & 0 & 0 & 0 & -1 & 0 & 0 & 0\\
0 & 1 & 0 & 0 & 0 & -\partial_{X} & 0 & -\partial_{Y}\\
0 & 0 & 1 & 0 & 0 & 0 & -\partial_{Y} & -\partial_{X}\\
0 & 0 & 0 & -\partial_{X} & 0 & 0 & 0 & 0\\
0 & 0 & 0 & 0 & -\partial_{Y} & 0 & 0 & 0\\
0 & 0 & 0 & -\partial_{Y} & -\partial_{X} & 0 & 0 & 0
\end{array}\right].
\]
The energy balance follows from equation (\ref{eq:BalanceDirac})
where (\ref{eq:DiracEnergy}) has to be evaluated by a further integration
by parts, since $\mathcal{J}_{SD}$ is a differential operator which
contributes to the boundary expression, i.e. $\mathcal{B}_{\mathcal{J}_{SD}}$
has to be constructed, see \cite{Machelli2005CDC}.

\section{Discussion and Comparison}

In this section we will discuss the main differences of the two presented
port-Hamiltonian scenarios as in sections (\ref{sub:Geometric-approach-based})
and (\ref{sub:Approach-based-on}) where we will highlight these aspects
by focusing on the presented example, the Mindlin plate.

\subsection{State variables}

The state variables in the geometric approach consist of the displacements/deflections
and the temporal momenta, i.e. $x=(w,\psi,\phi,p_{w},p_{\psi},p_{\phi})$
in our example where in many cases (mechanical systems) the temporal
momenta can be derived from a given Lagrangian by means of the Legendre
transformation. These temporal momenta are introduced mainly to obtain
explicit partial differential equations where the state variables
are differentiated with respect to a curve parameter, which is the
time. 

In contrast to this, in the approach based on the Stokes-Dirac structures,
energy variables are used, such that the strains are introduced in
mechanical applications and one has $\chi=(\rho h\dot{w},\Gamma_{xz},\Gamma_{yz},\rho\frac{h^{3}}{12}\dot{\psi},\rho\frac{h^{3}}{12}\dot{\phi},\Gamma_{x},\Gamma_{y},\Gamma_{xy})$.
From a conceptional point of view the use of energy/power variables
may be beneficial since they are linked to the power balance relation
in a simple manner, but as can be seen already using the Mindlin plate
example, the five strain variables have to be derived from the three
independent deflection/displacement variables $(w,\psi,\phi)$ by
differentiation, see (\ref{eq:Strains}), such that additionally to
the partial differential equations (\ref{eq:pdeDiracEx}) also the
compatibility conditions (\ref{eq:Strains}) must be listed, such
that a constrained Hamiltonian representation is apparent.

\subsection{Control issues}

From a control point of view control methods like damping injection
or control by interconnection can be performed equivalently using
the two presented Hamiltonian representations, see for example \cite{SiukaActa,MachelliSIAM}
where the Timoshenko beam is analyzed. However as stated also above,
the use of energy variables allows for controlling the system for
instance to zero strain configuration, but the global position in
space cannot be controlled in a straightforward manner since the deflection/displacement
coordinates do not enter the formalism, in contrast to the approach
as in (\ref{sub:Geometric-approach-based}).

\subsection{The skew-symmetric operators $\mathcal{J}$ and $\mathcal{J}_{SD}$ }

By inspection it becomes apparent that $\mathcal{J}$ and $\mathcal{J}_{SD}$
differ significantly, since $\mathcal{J}$ is no differential operator
in contrast to $\mathcal{J}_{SD}$. This also has severe consequences
for the expressions $\delta\mathfrak{H}$ as in (\ref{eq:HamTimeInvStandardCF-1})
and $e=\partial_{\chi}\mathcal{H}$ as in (\ref{eq:pdeDirac}), where
it is vice versa, i.e. $\delta$ is a variational derivative and in
the Stokes-Dirac approach a partial derivative appears. Let us consider
for instance the fourth equation of (\ref{eq:pdeDiracEx}) which reads
as
\[
\rho\frac{h^{3}}{12}\ddot{\psi}=1\cdot Q_{x}-\partial_{X}(-M_{x})-\partial_{Y}(-M_{xy})
\]
as well as the fifth equation of (\ref{eq:pdeGeoEx}) which is
\[
\dot{p}_{\psi}=-\delta_{\psi}\mathcal{H}=-(\partial_{\psi}-d_{X}\partial_{\psi}^{X}-d_{Y}\partial_{\psi}^{Y})\mathcal{H}.
\]
From $\partial_{\psi}\mathcal{H}=-Q_{x}$ as well as from $\partial_{\psi}^{X}\mathcal{H}=M_{x}\,,\,\partial_{\psi}^{Y}\mathcal{H}=M_{xy}$
we easily observe that a part of the variational derivate $\delta_{\psi}=\partial_{\psi}-d_{X}\partial_{\psi}^{X}-d_{Y}\partial_{\psi}^{Y}$
is incorporated in $\mathcal{J}_{SD}$ (namely $(1,-\partial_{X},-\partial_{Y})$)
and that the variables $Q_{x},\, M_{x}$ and $M_{xy}$ are used directly.

\subsection{The energy balances}

From the relation (\ref{eq:HdotPdeoO}) the power balance is derived
easily once the state $x$ as well as the Hamiltonian density $\mathcal{H}$
is chosen - this is very simple, since $\mathcal{J}$ is no differential
operator and does not contribute to the boundary term. This is different
in the approach as in section (\ref{sub:Approach-based-on}) since
the boundary operator $\mathcal{B}_{\mathcal{J}_{SD}}$ has to be
derived from the special choice of $\mathcal{J}_{SD}$ and by an additional
integration by parts one ends up again by the same relation as in
(\ref{eq:HdotMinGeo}).

\section{Conclusion}

We have presented two different port-Hamiltonian representations based
on a given set of partial differential equations together with their
boundary conditions derived by the Lagrangian formalism using jet-bundles.
By means of the running example, the Mindlin plate, we have extensively
discussed and compared these two different Hamiltonian scenarios.
Further investigations should also include the field theoretic Hamiltonian
concepts coming from mathematical physics, like the polysymplectic
and/or the multisymplectic approach as in \cite{Giachetta,Gotay}
or in the spirit as in \cite{Nishida}.

%%%%%%%%%%%%%%%%%%%%%%%%%%%%%%%%%%%%%%%%%%%%%%%%%%%%%%%%%%%%%%%%%%%%%%%%%%%%%%%%

\bibliographystyle{IEEEtran}
\bibliography{IEEEabrv,maxibib}
%\begin{thebibliography}{99}

%\bibitem{c1}
%J.G.F. Francis, The QR Transformation I, {\it Comput. J.}, vol. 4, 1961, pp 265-271.

%\bibitem{c2}
%H. Kwakernaak and R. Sivan, {\it Modern Signals and Systems}, Prentice Hall, Englewood Cliffs, NJ; 1991.

%\bibitem{c3}
%D. Boley and R. Maier, "A Parallel QR Algorithm for the Non-Symmetric Eigenvalue Algorithm", {\it in Third SIAM Conference on Applied Linear Algebra}, Madison, WI, 1988, pp. A20.

%\end{thebibliography}

\end{document}